\documentclass[12pt]{article}
\usepackage{mathrsfs}
\usepackage{amsfonts}
\usepackage{amsthm,amsmath,amssymb,anysize}
\newtheorem{lemma}{Lemma}[section]
\newtheorem{theorem}[lemma]{Theorem}
\newtheorem{remark}[lemma]{Remark}
\newtheorem{proposition}[lemma]{Proposition}
\newtheorem{definition}[lemma]{Definition}

\newtheorem{example}[lemma]{Example}
\usepackage[numbers,sort&compress]{natbib}
\usepackage{amsmath}
\allowdisplaybreaks[3]
\marginsize{3cm}{3cm}{3.6cm}{3cm}
\numberwithin{equation}{section}

\newcommand{\upcite}[1]{\textsuperscript{\textnormal{\cite{#1}}}}
\title{\bf{Local (Anti-)Superderivations on Nilpotent Lie Superalgebras}}
\author{{Xiaohui Chi, Huiyi Zhang, Lingxin Meng and Liming Tang\footnote{corresponding author}}\\
\small{School of Mathematical Sciences}\\
\small{Harbin Normal University}\\
\small{150025 Harbin, China}\\
\small{E-mail: limingtang@hrbnu.edu.cn}}
\date{ }
\date{ }
\begin{document}
\maketitle
 \begin{quotation}
{\small\noindent \textbf{Abstract}:
 In this paper, we study local (anti-)superderivations on finite-dimensional nilpotent Lie superalgebras. Firstly,  we prove that every finite-dimensional 2-step nilpotent Lie superalgebra over a field $\mathbb{F}$ with $\operatorname{char}\mathbb{F}\neq2$ admits pure local (anti-)superderivations (namely, local (anti-)superderivations that are not (anti-)superderivations). Then for $n$-step nilpotent Lie superalgebras over arbitrary fields with $n$ greater than 2, we provide a sufficient criterion to guarantee the existence of pure local (anti-)superderivations. Furthermore, we show that 3-step nilpotent Lie superalgebras admit pure local superderivations.
 
 \vspace{0.05cm} \noindent{\textbf{Keywords}}: nilpotent Lie superalgebra; 
local superderivation; local anti-superderivation

\vspace{0.05cm} \noindent \textbf{Mathematics Subject Classification
2010}: 17B05, 17B40}
\end{quotation}
 \setcounter{section}{0}
\section{Introduction}

Derivations 
 are core tools for characterizing algebraic structures. Filippov introduced $\delta$-derivations in \cite{97,98}, which unifies a variety of important maps including derivations (for $\delta=1$) and anti-derivations (for $\delta=-1$). Subsequently, researchers have extended the study of $\delta$-derivations and related results to superalgebras. Kaygorodov studied generalized derivations of non-associative superalgebras and their associated theories in \cite{95,96}, proved in \cite{96} that finite-dimensional classical Lie superalgebras over a field of characteristic zero admit no non-trivial $\delta$-derivations, and introduced $\delta$-superderivations in \cite{90}, where the case $\delta=1$ corresponds to superderivations and $\delta=-1$ corresponds to anti-superderivations.

Local derivations are a significant concept for various algebras, which measure some kind of
local property of the algebras. Kadison \cite{43}, Larson and Sourour \cite{17} introduced the concept of local derivation for Banach (or associative) algebras in 1990, which turns out to be very interesting.
Researchers have extended the research on local derivations to non-associative algebras, which has produced numerous relevant results \cite{38,3,4,5,7} . For instance, Ayupov and Kudaybergenov 
studied local derivations on non-associative algebras \cite{1}. They proved that there are no non-trivial local derivations on finite-dimensional semisimple Lie algebras, and constructed examples of finite-dimensional nilpotent Lie algebras admitting local derivations that are not derivations.  

The theory of local derivations has been naturally extended to the superalgebra setting.  In 2017, The notion of local superderivations for Lie superalgebras was introduced in \cite{28}. Subsequent systematic investigations on local superderivations of simple Lie superalgebras \cite{28,29,31} verified that every local superderivation of each basic classical Lie superalgebra over the complex field (except for A(1,1)), $\mathfrak{q}(n)$ with $n > 3$, as well as all Cartan-type Lie superalgebras, is a superderivation. This result has been further generalized to a wider class of Lie superalgebras, including the super Virasoro algebras \cite{80}, generalized quaternion algebras (regarded as Lie superalgebras) \cite{34}, the super Schr\"{o}dinger algebras \cite{37}, the N=2 super-BMS$_3$ algebra \cite{45}, and n-th order super Schr\"{o}dinger algebras \cite{46}. 

There exist Lie superalgebras with specific structures admitting pure local superderivations. For instance, a class of filiform Lie superalgebras (nilpotent Lie superalgebras) is shown to admit pure local superderivations\cite{28}. Concrete examples of nilpotent and solvable Lie superalgebras with pure local superderivations have been constructed in \cite{35}. Furthermore, it is verified that the $N=1$ super-BMS algebra admits pure local superderivations\cite{44}.

The concept of local anti-derivations was introduced by Ayupov \cite{99}. He proved that any local anti-derivation on a solvable Lie algebra of maximal rank with filiform, Heisenberg, or abelian nilradical is an anti-derivation. On the other hand, solvable Lie algebras with filiform nilradicals admit local anti-derivations that are not anti-derivations. To date, abundant results have been established for local (super)derivations, but the study of local anti-derivations remains relatively limited. In particular, local anti-superderivations has not yet been introduced and systematically investigated.

Motivated by the works in \cite{38,30}, in this paper, local (anti-)superderivations of nilpotent Lie superalgebras are investigated. This paper is organized as follows: In Section 2, we recall relevant basic concepts, and introduce the definition of local anti-superderivations. In Section 3, we prove that every finite-dimensional 2-step nilpotent Lie superalgebra over a field $\mathbb{F}$ with $\operatorname{char}\mathbb{F}\neq2$ admits pure local (anti-)superderivations. For the case of characteristic $2$, we construct a finite-dimensional 2-step nilpotent Lie superalgebra for which all local superderivations are superderivations. In Section 4, we show that 3-step nilpotent Lie superalgebras admit pure local superderivations. For the case of nilpotency index greater than 3, we provide a set of sufficient conditions for the existence of pure local (anti-)superderivations.

Throughout this paper, we denote by $\operatorname{char}\mathbb{F}$, $\mathbb{N}^*$, $\mathbb{Z}_2$ the characteristic of the field $\mathbb{F}$, the set of positive integers, and the additive group of integers modulo $2$, respectively. We use $\{X \mid Y\}$ to stand for the set of all homogeneous basis elements of a superalgebra $L$, where $X$ is the collection of even basis elements and $Y$ is the collection of odd basis elements. All Lie superalgebras considered in this paper are finite-dimensional over $\mathbb{F}$.

\section{Preliminaries}

In this section, we mainly introduce the fundamental concepts related to nilpotent Lie superalgebras.


Let $V$ be a vector space over a field $\mathbb{F}$. If $V$ admits a direct sum decomposition of subspaces $V = V_{\bar{0}} \oplus V_{\bar{1}}$, then $V$ is called a \emph{superspace} (or $\mathbb{Z}_2$-graded space).
Elements in $V_{\bar{0}}$ are called even elements, and elements in $V_{\bar{1}}$ are called odd elements. Both even and odd elements are referred to as $\mathbb{Z}_2$-homogeneous elements.
For a $\mathbb{Z}_2$-homogeneous element $x$, we write $|x|$ for its $\mathbb{Z}_2$-degree. 
  If $L$ satisfies $L = L_{\bar{0}} \oplus L_{\bar{1}}$ and $L_{\alpha} L_{\beta} \subseteq L_{\alpha+\beta}$ for all $\alpha, \beta \in \mathbb{Z}_2$, then $L$ is called a superalgebra over $\mathbb{F}.$


\begin{definition}\upcite{39}
Let $L = L_{\bar{0}} \oplus L_{\bar{1}}$ be a Lie superalgebra over a field $\mathbb{F}$. If its bracket operation $[\cdot, \cdot]$ satisfies the following two identities for all $x \in L_{\alpha}$, $y \in L_{\beta}$, $z \in L$ with $\alpha, \beta \in \mathbb{Z}_2$:
\begin{align*}
[x, y] &= -(-1)^{|x||y|}[y, x], \\
[x, [y, z]] &= [[x, y], z] + (-1)^{|x||y|}[y, [x, z]],
\end{align*}
then $L$ is called a Lie superalgebra over $\mathbb{F}$.
\end{definition}

Let $L$ be a Lie superalgebra over a field $\mathbb{F}$. The sequence
\begin{align*}
L^0 = L,\quad L^1 = [L, L],\quad L^2 = [L, L^1],\quad \cdots,\quad L^k = [L, L^{k-1}],\quad k \in \mathbb{N}^*
\end{align*}
is called \emph{the lower central series of $L$}.
Let $L$ be a Lie superalgebra over a field $\mathbb{F}$. If there exists a positive integer $n$ such that $L^n = 0$, then $L$ is called a \emph{nilpotent Lie superalgebra}. Furthermore, The smallest positive integer $n$ satisfying $L^n = 0$ is called the \emph{nilpotency index} of $L$. A Lie superalgebra with nilpotency index $n$ is called a \emph{n-step nilpotent Lie superalgebra}.
Let $L$ be a Lie superalgebra over a field $\mathbb{F}$. The set
$$
Z(L) = \{ x \in L \mid [x, y] = 0 \text{ for all } y \in L \}
$$
is called the \textit{center} of $L$.

The definition of $\delta$-superderivation for a superalgebra is give in \cite{90}. The analogous one for a Lie superalgebra is as following: 
\begin{definition}
\ Let $L = L_{\bar{0}} \oplus L_{\bar{1}}$ be a Lie superalgebra. Fix $\delta \in \mathbb{F}$ and $D \in \mathfrak{gl}_{\alpha}(L)$, $\alpha\in \mathbb{Z}_2$. The identity
\[
D([x,y]) = \delta\big([D(x),y] + (-1)^{|D||x|}[x,D(y)]\big)
\]
holds, then $D$ is called an \textit{$\alpha$-degree $\delta$-superderivation} of the Lie superalgebra $L$.
\end{definition}

If $\alpha = \bar{0}$, $D$ is said to be an \textit{even $\delta$-superderivation}; if $\alpha = \bar{1}$, $D$ is said to be an \textit{odd $\delta$-superderivation}.

Denote by $\operatorname{Der}_{\delta}(L)$ the set of all $\delta$-superderivations of $L$. Write $\operatorname{Der}_{\delta,\bar{0}}(L)$ for the subset consisting of all even $\delta$-superderivations, and $\operatorname{Der}_{\delta,\bar{1}}(L)$ for the subset consisting of all odd $\delta$-superderivations. We have
\[
\operatorname{Der}_{\delta}(L) = \operatorname{Der}_{\delta,\bar{0}}(L) \oplus \operatorname{Der}_{\delta,\bar{1}}(L).
\]

When $\delta = 1$, we have
\[
D([x,y]) = [D(x),y] + (-1)^{|D||x|}[x,D(y)],
\]
and in this case $D$ is called \emph{a superderivation} \cite{45}. 
When $\delta = -1$, we have
\[
D([x,y]) = -[D(x),y] - (-1)^{|D||x|}[x,D(y)],
\]
and in this case $D$ is called \emph{an anti-superderivation}.

A $\delta$-superderivation $D$ of $L$ is called \emph{trivial} if $D(x) = 0$ for all $x \in L$.

For all $x \in L$, the map $\operatorname{ad}x$ on $L$ defined as $\operatorname{ad}x(y) = [x,y]$, $y \in L$ is a superderivation and superderivations of this form are called \emph{inner superderivations}.
\begin{definition}\upcite{30}
Let $L$ be a Lie superalgebra, and let $\Delta$ be a linear transformation on $L$.
If for every $x \in L$, there exists a superderivation $D_x$ of $L$ (depending on $x$) such that $\Delta(x) = D_x(x)$, then $\Delta$ is called a local superderivation of $L$.
\end{definition}

 
We denote by $\operatorname{LDer}(L)$ the set of all local superderivations of $L$.

 The set of all the local superderivations of $L$ of parity $\alpha$ is denoted by $\operatorname{GLDer}_{\alpha}(L)$, that is,
\[
\operatorname{GLDer}_{\alpha}(L) = \operatorname{LDer}(L) \cap \mathfrak{gl}_{\alpha}(L).
\]
Write $\operatorname{GLDer}(L) := \operatorname{GLDer}_{\bar{0}}(L) \oplus \operatorname{GLDer}_{\bar{1}}(L)$. The elements of $\operatorname{GLDer}(L)$ are called \emph{graded local superderivations} of $L$. 

\begin{definition}\upcite{30}
A linear transformation $\phi$ of parity $\alpha$ of a Lie superalgebra $L$ is called a special graded local superderivation of parity $\alpha \in \mathbb{Z}_2$ if for any $x \in L$ there exists $D^x \in \operatorname{Der}_{\alpha}(L)$ (depending on $x$) such that $\phi(x) = D^x(x)$.
\end{definition}

Denote by $\operatorname{SGLDer}_{\alpha}(L)$ the set of all the special graded local superderivations of parity $\alpha$ of $L$. Write $\operatorname{SGLDer}(L) := \operatorname{SGLDer}_{\bar{0}}(L) \oplus \operatorname{SGLDer}_{\bar{1}}(L)$. The elements of $\operatorname{SGLDer}(L)$ are called  \emph{special graded local superderivations} of $L$. Obviously, $\operatorname{SGLDer}(L)$ is a subsuperspace of $\operatorname{GLDer}(L)$.We have a sequence of subspaces:
\[
\operatorname{Der}(L) \subset \operatorname{SGLDer}(L) \subset \operatorname{GLDer}(L) \subset \operatorname{LDer}(L).
\]

In the following, we introduce the notion of local anti-superderivations on a Lie superalgebra.
\begin{definition}
Let $L$ be a Lie superalgebra, and let $\Delta$ be a linear transformation on $L$.
If for every $x \in L$, there exists an anti-superderivation $T_x$ of $L$ (depending on $x$) such that $\Delta(x) = T_x(x)$, then $\Delta$ is called a local anti-superderivation of $L$.
\end{definition}
We denote by $\operatorname{LAntiDer}(L)$ the set of all local anti-superderivations of $L$. It is easy to see that $\operatorname{LAntiDer}(L)$ forms a vector subspace of End$_{\mathbb{F}}(L)$ but not necessarily a subsuperspace.

 The set of all the local anti-superderivations of $L$ of parity $\alpha$ is denoted by $\operatorname{GLAntiDer}_{\alpha}(L)$, 
\[
\operatorname{GLAntiDer}_{\alpha}(L) = \operatorname{LAntiDer}(L) \cap \mathfrak{gl}_{\alpha}(L).
\]
$\operatorname{GLAntiDer}(L) := \operatorname{GLAntiDer}_{\bar{0}}(L) \oplus \operatorname{GLAntiDer}_{\bar{1}}(L)$. The elements of $\operatorname{GLAntiDer}(L)$ are called \emph{graded local anti-superderivations} of $L$. $\operatorname{GLAntiDer}(L)$ is a subsuperspace of $End_{\mathbb{F}}(L)$.

\begin{definition}
A linear transformation $\phi$ of parity $\alpha$ of a Lie superalgebra $L$ is called a special graded local anti-superderivation of parity $\alpha \in \mathbb{Z}_2$ if for any $x \in L$ there exists $T^x \in \operatorname{AntiDer}_{\alpha}(L)$ (depending on $x$) such that $\phi(x) = T^x(x)$.
\end{definition}

Denote by $\operatorname{SGLAntiDer}_{\alpha}(L)$ the set of all the special graded local superderivations of parity $\alpha$ of $L$. Write $\operatorname{SGLAntiDer}(L) := \operatorname{SGLAntiDer}_{\bar{0}}(L) \oplus \operatorname{SGLAntiDer}_{\bar{1}}(L)$. The elements of $\operatorname{SGLAntiDer}(L)$ are called \emph{special graded local anti-superderivations} of $L$. We have a sequence of subspaces:
\[
\operatorname{AntiDer}(L) \subset \operatorname{SGLAntiDer}(L) \subset \operatorname{GLAntiDer}(L) \subset \operatorname{LAntiDer}(L).
\]


\section{2-Step Nilpotent Lie Superalgebras}

Let $L$ be a finite-dimensional 2-step nilpotent Lie superalgebra over a field $\mathbb{F}$ with $\operatorname{char}\mathbb{F} \neq 2$, satisfying $L^1 = [L, L]$ and $L^2 = 0$.

Take $B' = \{e_{l+1}, \dots, e_m \mid e_{m+t+1}, \dots, e_{m+n}\}$ as a homogeneous basis of $L^1$.
By the basis extension theorem, we obtain a homogeneous basis of $L$:
\[
B = \{e_1, \dots, e_m \mid e_{m+1}, \dots, e_{m+n}\},
\]
where
\[
L^1 = \operatorname{span}\left\{ [e_i, e_j] \mid i, j \in \{1, \dots, l\} \cup \{m+1, \dots, m+t\} \right\}.
\]

All results in this section are established under the above setting.

\subsection{Local Superderivations}

\begin{lemma}\label{B}
Let $L$ be a 2-step nilpotent Lie superalgebra over a field $\mathbb{F}$ with $\operatorname{char}\mathbb{F} \neq 2$.
For any $\lambda \in \mathbb{F}$, define a linear transformation $D_\lambda$ on $L$ by
\begin{align*}
D_\lambda(e_i) =
\begin{cases}
\lambda e_i, & e_i \in L \setminus L^1, \\
2\lambda e_i, & e_i \in L^1.
\end{cases}
\end{align*}
Then $D_\lambda$ is a superderivation of $L$.
\end{lemma}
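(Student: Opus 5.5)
It suffices to check the superderivation identity on pairs of homogeneous basis vectors, since both sides of
\[
D_\lambda([x,y]) = [D_\lambda(x),y] + (-1)^{|D_\lambda||x|}[x,D_\lambda(y)]
\]
are bilinear in $(x,y)$. The map $D_\lambda$ is diagonal in the homogeneous basis $B$, so it preserves parity and $|D_\lambda|=\bar 0$; the sign factor is therefore $1$. The phrase ``$e_i\in L\setminus L^1$'' will be read as ``$e_i$ is one of the basis vectors not in $B'$'', namely $e_1,\dots,e_l,e_{m+1},\dots,e_{m+t}$. Then $D_\lambda(e_i)=\lambda e_i$ for these and $D_\lambda(e_i)=2\lambda e_i$ for the basis vectors of $L^1$. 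By construction every basis vector is an eigenvector. Since $L^2=[L,L^1]=0$, we have $L^1\subseteq Z(L)$.

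Take basis vectors $e_i,e_j$ and split into cases.

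\emph{Case 1: at least one of $e_i,e_j$ lies in $B'$, say $e_j\in L^1$.} Then $[e_i,e_j]=0$ because $L^1$ is central, so the left side is $D_\lambda(0)=0$. On the right side, $D_\lambda(e_i)$ is a scalar multiple of $e_i$ and $D_\lambda(e_j)$ is a scalar multiple of $e_j$. Hence both brackets $[D_\lambda(e_i),e_j]$ and $[e_i,D_\lambda(e_j)]$ are scalar multiples of $[e_i,e_j]=0$, and the identity holds.

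\emph{Case 2: neither basis vector lies in $B'$.} Then $[e_i,e_j]\in L^1$, and $D_\lambda$ acts on $L^1$ as multiplication by $2\lambda$, so the left side is $2\lambda[e_i,e_j]$. The right side is $[\lambda e_i,e_j]+[e_i,\lambda e_j]=2\lambda[e_i,e_j]$. The two sides agree.

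The only point needing care is that $D_\lambda$ restricted to $L^1$ is exactly $2\lambda\,\mathrm{id}$, which holds because $B'$ is a basis of $L^1$. Everything else is immediate. The hypothesis $\operatorname{char}\mathbb F\neq2$ is not needed for this lemma and presumably matters only later.
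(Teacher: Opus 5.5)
Your proof is correct and follows essentially the same route as the paper: you reduce by bilinearity (the sign factor is $1$ since $D_\lambda$ is even) to pairs of basis vectors, and you split according to whether a basis vector lies in $L^1$, using that $L^1$ is central and that $D_\lambda$ acts on $L^1$ as $2\lambda\,\mathrm{id}$. Your explicit handling of the case where one vector lies in $L^1$ is slightly more careful than the paper's single chain of equalities, and you are right that $\operatorname{char}\mathbb{F}\neq 2$ plays no role in this lemma.
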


\begin{proof}
It is clear that $D_\lambda$ is an even linear transformation on $L$.
For arbitrary elements
\begin{align*}
x = \sum_{i} \alpha_i e_i, \quad y = \sum_{j} \beta_j e_j, \quad \alpha_i, \beta_j \in \mathbb{F},
\end{align*}
It is easy to see the following:
if $e_i, e_j \in L \setminus L^1$, then $D_\lambda([e_i, e_j]) = 2\lambda [e_i, e_j]$; 
if at least one of $e_i, e_j \in L^1$, then $D_\lambda([e_i, e_j]) = 0$.

Therefore,
\begin{align*}
D_\lambda([x, y]) &= D_\lambda\left( \sum_{i,j} \alpha_i \beta_j [e_i, e_j] \right) = 2\lambda \sum_{i,j} \alpha_i \beta_j [e_i, e_j] \\
&= \sum_{i,j} \alpha_i \beta_j [\lambda e_i, e_j] + \sum_{i,j} \alpha_i \beta_j [e_i, \lambda e_j] \\
&= \sum_{i,j} \alpha_i \beta_j [D_\lambda(e_i), e_j] + \sum_{i,j} \alpha_i \beta_j [e_i, D_\lambda(e_j)] \\
&= [D_\lambda(x), y] + [x, D_\lambda(y)] \\
&= [D_\lambda(x), y] + (-1)^{|D_\lambda||x|}[x, D_\lambda(y)].
\end{align*}
Hence $D_\lambda$ is an even superderivation of $L$.
\end{proof}

\begin{theorem}\label{A}
Let $L$ be a 2-step nilpotent Lie superalgebra over a field $\mathbb{F}$ with $\operatorname{char}\mathbb{F} \neq 2$.
Then $L$ admits pure local superderivations.
\end{theorem}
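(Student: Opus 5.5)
We will exhibit an explicit pure local superderivation: the identity map $\mathrm{id}_L$. Two facts drive the argument. First, Lemma~\ref{B} provides the even superderivations $D_\lambda$, which act as $\lambda$ on the complement of $L^1$ and as $2\lambda$ on $L^1$. Second, there is a large supply of "central" superderivations, which we will use to correct $D_\lambda$ pointwise.

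\emph{Step 1 (central superderivations).} Since $L^2=[L,L^1]=0$, we have $L^1\subseteq Z(L)$. Let $N$ be any homogeneous linear map with $N(L)\subseteq L^1$ and $N(L^1)=0$. Then $N([x,y])=0$ because $[x,y]\in L^1$. Also $[N(x),y]=0$ and $[x,N(y)]=0$ because $N(x),N(y)\in Z(L)$. So $N$ is a superderivation of either parity. Any linear map $N:L\to L^1$ vanishing on $L^1$ is a sum of its even and odd components. Each component again maps $L$ into $L^1$ and kills $L^1$, since $L^1$ is a graded subspace. Hence every such $N$ lies in $\operatorname{Der}(L)=\operatorname{Der}_{\bar 0}(L)\oplus\operatorname{Der}_{\bar 1}(L)$.

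\emph{Step 2 ($\mathrm{id}_L$ is local).} Let $U=\operatorname{span}\{e_i\notin L^1\}$, a graded complement of $L^1$. Write $x=u+w$ with $u\in U$ and $w\in L^1$.
\begin{itemize}
\item If $u=0$, then $x\in L^1$. Take $D_x=D_{1/2}$; this uses $\operatorname{char}\mathbb F\neq 2$. Then $D_x(x)=x$.
\item If $u\neq 0$, choose a linear functional $f$ on $L$ with $f(L^1)=0$ and $f(u)=1$. Put $N=-f(\cdot)\,w$ and $D_x=D_1+N$. By Lemma~\ref{B} and Step 1, $D_x$ is a superderivation. Moreover
\[
D_x(x)=u+2w-w=x .
\]
\end{itemize}
Thus $\mathrm{id}_L\in\operatorname{LDer}(L)$. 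Since $\mathrm{id}_L$ is even, in fact $\mathrm{id}_L\in\operatorname{GLDer}_{\bar 0}(L)$.

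\emph{Step 3 ($\mathrm{id}_L$ is not a superderivation).} Because $L$ is 2-step nilpotent, $L^1\neq 0$, so there are $x,y\in L$ with $[x,y]\neq 0$. If $\mathrm{id}_L$ were a superderivation, we would get
\[
[x,y]=\mathrm{id}_L([x,y])=[x,y]+[x,y]=2[x,y].
\]
This forces $[x,y]=0$, a contradiction. Hence $\mathrm{id}_L$ is a pure local superderivation.

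The only delicate point is Step 2, specifically whether $D_x$ is allowed to be non-homogeneous. Step 1 resolves this: $N$ splits into homogeneous superderivations, so $D_x\in\operatorname{Der}(L)$ in either reading of the definition. If one insists that $D_x$ be homogeneous, a cruder fallback is to treat $x$ componentwise. That fallback is not needed here, because the definition of local superderivation only asks for some superderivation $D_x$.
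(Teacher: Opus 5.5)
Your proof is correct, but it uses a different witness from the paper's. The paper takes $\Delta$ to be $0$ on the basis vectors outside $L^1$ and $2\,\mathrm{id}$ on $L^1$. For $x\notin L^1$ it builds a purely central $D_x$: the basis vectors outside $L^1$ with nonzero coefficient are sent into $L^1$ and all other basis vectors to $0$. This is done in three cases according to the parities of those coordinates, and $D_1$ is used for $x\in L^1$. You instead take $\mathrm{id}_L$, use $D_{1/2}$ on $L^1$, and otherwise use a single rank-one central correction $D_1-f(\cdot)\,w$.

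The two witnesses are equivalent modulo $\operatorname{Der}(L)$: $\mathrm{id}_L=D_1-\tfrac12\Delta$. Since $\operatorname{LDer}(L)$ is a vector space containing $\operatorname{Der}(L)$, each proof implies the other. Your route gains three things:
\begin{itemize}
\item it avoids the parity case split;
\item your Step 1 actually proves that maps $L\to L^1$ vanishing on $L^1$ are superderivations, which the paper only asserts as ``straightforward'';
\item your non-derivation check works in every characteristic.
\end{itemize}
The hypothesis $\operatorname{char}\mathbb{F}\neq 2$ simply moves: the paper needs it to get $2e_r\neq 0$, while you need it for $D_{1/2}$ when $x\in L^1$.

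Your closing remark, however, is wrong. The argument does not work ``in either reading'', and there is no componentwise fallback for $\mathrm{id}_L$. Your $D_x=D_1+N$ is generally not homogeneous, and for $\mathrm{id}_L$ a homogeneous witness need not exist. Take $H_{0,1}$ with $[e_2,e_2]=e_1$ and $\operatorname{char}\mathbb{F}\neq 2$:
\begin{itemize}
\item the even superderivations are $e_2\mapsto ae_2$, $e_1\mapsto 2ae_1$;
\item the odd ones are $e_2\mapsto be_1$, $e_1\mapsto 0$.
\end{itemize}
Neither kind fixes $x=e_1+e_2$, so $\mathrm{id}_L\notin\operatorname{SGLDer}_{\bar 0}(H_{0,1})$. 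This does not damage the proof itself. The paper's definition only requires $D_x\in\operatorname{Der}(L)=\operatorname{Der}_{\bar 0}(L)\oplus\operatorname{Der}_{\bar 1}(L)$, and the paper's own Cases 1 and 2 also use non-homogeneous $D_x$. You should drop the claim that homogeneity is harmless.
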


\begin{proof}
Define a homogeneous linear map $\Delta: L \to L$ by
\begin{align*}
\Delta(e_i) =
\begin{cases}
0, & e_i \in L \setminus L^1, \\
2e_i, & e_i \in L^1.
\end{cases}
\end{align*}

We first show that $\Delta$ is not a superderivation.
 There exist $p, q \in L \setminus L^1$ such that $[p, q] = e_r \in L^1$.
A direct computation shows that
\begin{align*}
&\Delta([p, q]) = 2[p, q] = 2e_r \neq 0, \\
&[\Delta(p), q] + (-1)^{|\Delta||p|}[p, \Delta(q)] = [0, q] + [p, 0] = 0.
\end{align*}
Thus $\Delta$ fails the superderivation identity.

We now prove that $\Delta$ is a local superderivation.
For any element $x = \sum\limits_{i=1}^{m+n} \lambda_i e_i \in L$, we have
\begin{align*}
\Delta(x) = 2\sum\limits_{i=l+1}^{m} \lambda_i e_i + 2\sum\limits_{i=m+t+1}^{m+n} \lambda_i e_i.
\end{align*}
We consider four cases:

\medskip
\noindent\textit{Case 1.}
Suppose there are $k$ even basis elements $e_{i_1}, e_{i_2}, \dots, e_{i_k}$ ($1 \leq i_s \leq l$, $0 < k \leq l$) with nonzero coefficients, i.e.,
\[
x = \sum_{s=1}^k \lambda_{i_s} e_{i_s} + \sum_{i=l+1}^m \lambda_i e_i + \sum_{i=m+t+1}^{m+n} \lambda_i e_i.
\]
Define a linear map $D_x: L \to L$ with its even component
\[
(D_x)_{\bar{0}}(e_j) =
\begin{cases}
\dfrac{2}{k\lambda_j} \displaystyle\sum_{i=l+1}^{m} \lambda_i e_i, & j \in \{i_1, \dots, i_k\}, \\[12pt]
0, & j \notin \{i_1, \dots, i_k\},
\end{cases}
\]
and odd component
\[
(D_x)_{\bar{1}}(e_j) =
\begin{cases}
\dfrac{2}{k\lambda_j} \displaystyle\sum_{i=m+t+1}^{m+n} \lambda_i e_i, & j \in \{i_1, \dots, i_k\}, \\[12pt]
0, & j \notin \{i_1, \dots, i_k\}.
\end{cases}
\]
Equivalently, the full map can be written as
\[
D_x(e_j) =
\begin{cases}
\dfrac{2}{k\lambda_j} \left( \displaystyle\sum_{i=l+1}^{m} \lambda_i e_i + \displaystyle\sum_{i=m+t+1}^{m+n} \lambda_i e_i \right), & j \in \{i_1, \dots, i_k\}, \\[12pt]
0, & j \notin \{i_1, \dots, i_k\}.
\end{cases}
\]
It is straightforward to verify that $D_x$ is a superderivation, and
\begin{align*}
D_x(x) = \sum_{s=1}^{k} \lambda_{i_s} D_x(e_{i_s}) = 2\sum_{i=l+1}^{m} \lambda_i e_i + 2\sum_{i=m+t+1}^{m+n} \lambda_i e_i = \Delta(x).
\end{align*}

\medskip
\noindent\textit{Case 2.}
Suppose there are $r$ odd basis elements $e_{i_1}, e_{i_2}, \dots, e_{i_r}$ ($m+1 \leq i_s \leq m+t$, $0 < r \leq t$) with nonzero coefficients, i.e.,
\[
x = \sum_{s=1}^r \lambda_{i_s} e_{i_s} + \sum_{i=l+1}^m \lambda_i e_i + \sum_{i=m+t+1}^{m+n} \lambda_i e_i.
\]
Define a linear map $D_x: L \to L$ with its even component
\[
(D_x)_{\bar{0}}(e_j) =
\begin{cases}
\dfrac{2}{r\lambda_j} \displaystyle\sum_{i=m+t+1}^{m+n} \lambda_i e_i, & j \in \{i_1, \dots, i_r\}, \\[12pt]
0, & j \notin \{i_1, \dots, i_r\},
\end{cases}
\]
and its odd component
\[
(D_x)_{\bar{1}}(e_j) =
\begin{cases}
\dfrac{2}{r\lambda_j} \displaystyle\sum_{i=l+1}^{m} \lambda_i e_i, & j \in \{i_1, \dots, i_r\}, \\[12pt]
0, & j \notin \{i_1, \dots, i_r\}.
\end{cases}
\]
Equivalently, the full map can be written as
\[
D_x(e_j) =
\begin{cases}
\dfrac{2}{r\lambda_j} \left( \displaystyle\sum_{i=l+1}^{m} \lambda_i e_i + \displaystyle\sum_{i=m+t+1}^{m+n} \lambda_i e_i \right), & j \in \{i_1, \dots, i_r\}, \\[12pt]
0, & j \notin \{i_1, \dots, i_r\}.
\end{cases}
\]
By a similar argument, $D_x$ is a superderivation and satisfies $D_x(x) = \Delta(x)$.

\medskip
\noindent\textit{Case 3.}
Suppose there are $a$ even basis elements $e_{i_1}, \dots, e_{i_a}$ and $b$ odd basis elements $e_{p_1}, \dots, e_{p_b}$ with nonzero coefficients, i.e.,
\begin{align*}
&\lambda_{i_s} \neq 0,\quad s = 1, 2, \dots, a,\quad i_s \in \{1, \dots, l\}, \\
&\lambda_{p_s} \neq 0,\quad s = 1, 2, \dots, b,\quad p_s \in \{m+1, \dots, m+t\},
\end{align*}
and
\[
x = \sum_{s=1}^a \lambda_{i_s} e_{i_s} + \sum_{s=1}^b \lambda_{p_s} e_{p_s} + \sum_{i=l+1}^m \lambda_i e_i + \sum_{i=m+t+1}^{m+n} \lambda_i e_i.
\]
Define a linear map $D_x: L \to L$ by
\[
D_x(e_j) =
\begin{cases}
\dfrac{2}{a\lambda_j} \displaystyle\sum_{i=l+1}^{m} \lambda_i e_i, & j \in \{i_1, \dots, i_a\}, \\[12pt]
\dfrac{2}{b\lambda_j} \displaystyle\sum_{i=m+t+1}^{m+n} \lambda_i e_i, & j \in \{p_1, \dots, p_b\}, \\[12pt]
0, & j \notin \{i_1, \dots, i_a\} \cup \{p_1, \dots, p_b\}.
\end{cases}
\]
It is straightforward to verify that $D_x$ is a superderivation and $D_x(x) = \Delta(x)$.

\medskip
\noindent\textit{Case 4.}
Suppose $\lambda_i = 0$ for all $i \in \{1, \dots, l\} \cup \{m+1, \dots, m+t\}$. Then
\begin{align*}
\Delta(x) = 2\sum\limits_{i=l+1}^{m} \lambda_i e_i + 2\sum\limits_{i=m+t+1}^{m+n} \lambda_i e_i.
\end{align*}
Take the superderivation $D_x = D_1$ given in Lemma \ref{B}, i.e.,
\begin{align*}
D_x(e_i) =
\begin{cases}
e_i, & e_i \in L \setminus L^1, \\
2e_i, & e_i \in L^1.
\end{cases}
\end{align*}
Then clearly $D_x(x) = \Delta(x)$.

\medskip

In all cases, for every $x \in L$, there exists a superderivation $D_x$ such that $\Delta(x) = D_x(x)$.
Therefore, $\Delta$ is a even graded local superderivations but not a even superderivation, ie.,
\[
\operatorname{Der}_{\bar{0}}(L) \subsetneq \operatorname{GLDer}_{\bar{0}}(L)
\]
Hence, $\operatorname{Der}(L) \subsetneq \operatorname{LDer}(L)$,
which shows that $L$ admits pure local superderivations.
\end{proof}

\begin{example}\label{F}
Let $L$ be a finite-dimensional even-centered  Heisenberg Lie superalgebra over a field $\mathbb{F}$ with $\operatorname{char}\mathbb{F} \neq 2$. Let
\begin{align*}
B = \{e_1, \dots, e_m, e_{m+1}, \dots, e_{2m}, e_0 \mid e_{-1}, \dots, e_{-n}\}
\end{align*}
be a basis of $L$, with nonzero bracket relations:
\begin{align*}
[e_i, e_{m+i}] = e_0, \quad [e_{-t}, e_{-t}] = e_0, \quad i \in \{1, \dots, m\},\ t \in \{1, \dots, n\}.
\end{align*}
Note that $L^1 = \operatorname{span}\{e_0\}$, so $L$ is indeed a 2-step nilpotent Lie superalgebra.

Define a homogeneous linear map $\Delta: L \to L$ by
\begin{align*}
\Delta(e_k) =
\begin{cases}
e_0, & k = 0, \\
0, & k \neq 0.
\end{cases}
\end{align*}
First, it is easy to see that
\begin{align*}
&[\Delta(e_1), e_{m+1}] + (-1)^{|\Delta||e_1|}[e_1, \Delta(e_{m+1})] = [0, e_{m+1}] + [e_1, 0] = 0, \\
&\Delta([e_1, e_{m+1}]) = \Delta(e_0) = e_0 \neq 0,
\end{align*}
which implies that $\Delta$ is not a even superderivation. We now prove that $\Delta$ is a even graded local superderivation.

For any element $x = \displaystyle\sum_{i=-n}^{2m} \lambda_i e_i$, we have $\Delta(x) = \lambda_0 e_0$. We consider two cases:

\medskip
\noindent\textit{Case 1.}
Suppose there are $k$ basis elements $e_{i_1}, e_{i_2}, \dots, e_{i_k}$ with $e_{i_s} \neq e_0(s\in \{1,\cdots,k\})$ and $\lambda_{i_s} \neq 0$.
Define a linear map $D_x$ by
\[
D_x(e_j) = \frac{1}{k\lambda_j} \lambda_0 e_0 \quad \text{for } \lambda_j \neq 0, \quad D_x(e_i) = 0 \quad \text{for } i \neq j.
\]
It is clear that $D_x$ is a superderivation, and
\[
D_x(x) = \lambda_0 e_0 = \Delta(x).
\]

\medskip
\noindent\textit{Case 2.}
Suppose $\lambda_i = 0$ for all $i \neq 0$. Define a linear map $D_x$ by
\[
D_x(e_i) =
\begin{cases}
e_0, & i = 0, \\
\frac{1}{2}e_i, & i \neq 0.
\end{cases}
\]
By Lemma \ref{B}, $D_x$ is a superderivation, and
\[
D_x(x) = \lambda_0 D_x(e_0) = \lambda_0 e_0 = \Delta(x).
\]

\medskip
Therefore, $\Delta$ is a even graded local superderivation. Furthermore, $L$ admits pure local superderivations.

\end{example}


Theorem \ref{A} fails over fields with $\operatorname{char}\mathbb{F}=2$. We present a counterexample below.

\begin{example}
  Let $\mathbb{F}$ be a field with $\operatorname{char}\mathbb{F}=2$, and $H_{0,1}$ be the even center Heisenberg Lie superalgebra over $\mathbb{F}$ with homogeneous basis $\{e_1,e_2\}$, whose bracket multiplication is given by $[e_2,e_2]=e_1$.

Let $D_{\bar{0}}$ be an even linear map of $H_{0,1}$. Then $D_{\bar{0}}$ is an even superderivation of $H_{0,1}$ if and only if the matrix of $D_{\bar{0}}$ with respect to the homogeneous basis $\{e_1,e_2\}$ of $H_{0,1}$ is of the form
\[
\begin{pmatrix}
0 & 0 \\
0 & a
\end{pmatrix},
\]
where $a\in\mathbb{F}$.

Let $D_{\bar{1}}$ be an odd linear map of $H_{0,1}$. Then $D_{\bar{1}}$ is an odd superderivation of $H_{0,1}$ if and only if the matrix of $D_{\bar{1}}$ with respect to the homogeneous basis $\{e_1,e_2\}$ of $H_{0,1}$ is of the form
\[
\begin{pmatrix}
0 & b \\
0 & 0
\end{pmatrix},
\]
where $b\in\mathbb{F}$.

 Suppose that $\Delta:H_{0,1}\to H_{0,1}$ is an even linear map. We discuss the following three cases:

\noindent\textit{Case 1.}$\quad\Delta(e_1)=e_1$, $\Delta(e_2)=0$.

By Theorem 2.2, $\Delta$ is neither a superderivation nor a local superderivation.

\noindent\textit{Case 2.}$\quad\Delta(e_2)=e_2$, $\Delta(e_1)=0$.

Direct computation shows that $\Delta$ is a even superderivation. The matrix representation of $\Delta$ with respect to the basis $\{e_1,e_2\}$ is
\[
\begin{pmatrix}0&0\\0&1\end{pmatrix}
\]
which coincides with the even superderivations of $H_{0,1}$.

\noindent\textit{Case 3.} $\quad\Delta(e_1)=e_1$, $\Delta(e_2)=e_2$, i.e., $\Delta$ is the identity map.

We have
\[
\Delta([e_2,e_2])=\Delta(e_1)=e_1\neq 0=2[e_2,e_2]=\bigl[\Delta(e_2),e_2\bigr]+(-1)^{|\Delta||e_2|}\bigl[e_2,\Delta(e_2)\bigr],
\]
so $\Delta$ is not a even superderivation. Now take $x=e_1$. We have $\Delta(e_1)=e_1$, while every even superderivation $T$ satisfies $T(e_1)=0$. Hence there exists no even superderivation coinciding with $\Delta$ at $e_1$, which implies that $\Delta$ is not a local superderivation.

Similarly, the same argument applies when $\Delta$ is an odd linear map.

Hence, $\operatorname{LDer}(H_{0,1})=\operatorname{SLDer}(H_{0,1})=\operatorname{Der}(H_{0,1})$.
\end{example}

\subsection{Local Anti-superderivations}

\begin{lemma}
Let $L$ be a 2-step nilpotent Lie superalgebra over a field $\mathbb{F}$ with $\operatorname{char}\mathbb{F}\neq 2$. Define a linear map $T_\lambda: L \to L$ by
\[
T_\lambda(e_i) =
\begin{cases}
\lambda e_i, & e_i \in L\setminus L^1, \\
-2\lambda e_i, & e_i \in L^1.
\end{cases}
\]
Then $T_\lambda$ is an anti-superderivation of $L$.
\end{lemma}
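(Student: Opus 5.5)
The plan mirrors the proof of Lemma \ref{B}. First I note that $T_\lambda$ is diagonal in the homogeneous basis $B$, so it preserves $L_{\bar 0}$ and $L_{\bar 1}$. Hence it is even, and the sign $(-1)^{|T_\lambda||x|}$ equals $1$. The identity to verify is therefore
\[
T_\lambda([x,y]) = -[T_\lambda(x),y]-[x,T_\lambda(y)]
\]
for all $x,y\in L$. Both sides are bilinear in $(x,y)$, so it suffices to check it on pairs of basis elements $e_i,e_j\in B$.

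The basis pairs split into two cases according to the setup of this section: $L^1$ is spanned by part of $B$, and $L^2=[L,L^1]=0$.

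\emph{Case 1: at least one of $e_i,e_j$ lies in $L^1$.} Then $[e_i,e_j]\in L^2=0$, so the left side is $0$. On the right side, $T_\lambda$ rescales each basis element, so $T_\lambda(e_i)$ and $T_\lambda(e_j)$ are scalar multiples of $e_i,e_j$. One of the two factors in each bracket therefore remains in $L^1$, and both brackets vanish.

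\emph{Case 2: $e_i,e_j\in L\setminus L^1$.} Then $[e_i,e_j]\in L^1$, and since $L^1$ is spanned by basis vectors on which $T_\lambda$ acts by $-2\lambda$, we get
\[
T_\lambda([e_i,e_j])=-2\lambda[e_i,e_j].
\]
The right side is $-[\lambda e_i,e_j]-[e_i,\lambda e_j]=-2\lambda[e_i,e_j]$, so the two sides agree.

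Extending by bilinearity, exactly as in the displayed computation of Lemma \ref{B}, gives the identity for all $x,y$. So $T_\lambda$ is an even anti-superderivation, i.e.\ an element of $\operatorname{Der}_{-1,\bar 0}(L)$.

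There is no real obstacle. The only point needing care is that $T_\lambda([e_i,e_j])=-2\lambda[e_i,e_j]$ even when $[e_i,e_j]$ is a linear combination rather than a single basis vector. This holds because $T_\lambda$ acts as the scalar $-2\lambda$ on the whole subspace $L^1$, which is spanned by the basis vectors $e_{l+1},\dots,e_m,e_{m+t+1},\dots,e_{m+n}$. The hypothesis $\operatorname{char}\mathbb{F}\neq 2$ is not needed for the verification itself; it matters only for later use of the lemma.
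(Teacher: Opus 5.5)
Your proposal is correct and follows essentially the same route as the paper: observe that $T_\lambda$ is even, then reduce the identity by bilinearity to basis pairs using $L^2=0$. The paper does this in a single computation, tacitly replacing $T_\lambda(e_i)$ by $\lambda e_i$ inside the brackets. Your explicit case split, according to whether a basis element lies in $L^1$, makes precise why that replacement is harmless.
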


\begin{proof}

It is clear that $T_\lambda$ is an even linear transformation on $L$. For arbitrary elements $x = \displaystyle\sum_{i} \alpha_i e_i$, $y =\displaystyle\sum_{j} \beta_j e_j \in L$, we have
\[
\begin{aligned}
T_\lambda([x,y]) &= T_\lambda\left( \sum_{i,j} \alpha_i\beta_j [e_i,e_j] \right)= \sum_{i,j} \alpha_i\beta_j T_\lambda([e_i,e_j])=-2\lambda \sum_{i,j} \alpha_i\beta_j [e_i,e_j].
\end{aligned}
\]
On the other hand,
\[
\begin{aligned}
&-[T_\lambda(x), y] - (-1)^{|T_\lambda||x|}[x, T_\lambda(y)]\\
&= -\left[ \sum_i \alpha_i T_\lambda(e_i), \sum_j \beta_j e_j \right] -(-1)^{|T_\lambda||x|}\left[ \sum_i \alpha_i e_i, \sum_j \beta_j T_\lambda(e_j) \right] \\
&= -\left[ \sum_i \alpha_i \lambda e_i, \sum_j \beta_j e_j \right] -(-1)^{|T_\lambda||x|} \left[ \sum_i \alpha_i e_i, \sum_j \beta_j \lambda e_j \right] \\
&= -\lambda \sum_{i,j} \alpha_i\beta_j [e_i,e_j] - (-1)^{|T_\lambda||x|}\lambda \sum_{i,j} \alpha_i\beta_j [e_i,e_j] \\
&= -2\lambda \sum_{i,j} \alpha_i\beta_j [e_i,e_j].
\end{aligned}
\]
Therefore, $T_\lambda([x,y]) = -[T_\lambda(x), y] -(-1)^{|T_\lambda||x|} [x, T_\lambda(y)]$, which means $T_\lambda$ is an even anti-superderivation.
\end{proof}

\begin{theorem}
Let $L$ be a 2-step nilpotent Lie superalgebra over a field $\mathbb{F}$ with $\operatorname{char}\mathbb{F}\neq 2$. Then $L$ admits pure local anti-superderivations.
\end{theorem}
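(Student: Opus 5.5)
I would mirror the proof of Theorem~\ref{A}, replacing superderivations by anti-superderivations and the map $D_\lambda$ of Lemma~\ref{B} by the map $T_\lambda$ of the preceding lemma. The candidate is the even linear map
\[
\Delta(e_i)=\begin{cases} 0, & e_i\in L\setminus L^1,\\ -2e_i, & e_i\in L^1,\end{cases}
\]
which agrees with $T_1$ on $L^1$ and vanishes on the complementary basis vectors. To see that $\Delta$ is not an anti-superderivation, choose $p,q\in L\setminus L^1$ among the basis vectors with $[p,q]\neq 0$. Such a pair exists because $L^1\neq 0$ is spanned by brackets of the basis vectors outside $L^1$. Then $\Delta([p,q])=-2[p,q]\neq 0$, using $\operatorname{char}\mathbb{F}\neq 2$. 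On the other hand, $-[\Delta(p),q]-(-1)^{|\Delta||p|}[p,\Delta(q)]=0$.

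For locality, write $x=\sum\lambda_i e_i$, so that $\Delta(x)=-2\sum_{e_i\in L^1}\lambda_ie_i=:-2z$, where $z\in L^1=Z$-part. The key observation is the following. If $T$ is any linear map sending the basis vectors outside $L^1$ into $L^1$ and killing $L^1$, then $T([L,L])=0$. Moreover $[T(x),y]$ and $[x,T(y)]$ both vanish, because $L^1\subseteq Z(L)$ when $L^2=0$. Hence every such $T$ is both a superderivation and an anti-superderivation, provided each homogeneous component is treated separately. The cases therefore copy those of Theorem~\ref{A}.

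\emph{Case 1.} Suppose some coefficient $\lambda_j$ with $e_j\notin L^1$ is nonzero. Let $e_{i_1},\dots,e_{i_k}$ be the basis vectors outside $L^1$ with $\lambda_{i_s}\neq0$, and set $T_x(e_{i_s})=\frac{-2}{k\lambda_{i_s}}z$ and $T_x=0$ on all other basis vectors. This gives $T_x(x)=\Delta(x)$. The even and odd components of $T_x$ are each of the type just described, so each is an anti-superderivation, and hence so is their sum. The subcases with only even, only odd, or mixed non-central support, as split in Theorem~\ref{A}, are handled identically; the mixed case uses the $a,b$ normalization.

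\emph{Case 2.} Suppose $x\in L^1$. Take $T_x=T_1$ from the preceding lemma, which gives $T_x(x)=-2x=\Delta(x)$.

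The only delicate point is checking that the non-homogeneous $T_x$ of Case 1 qualifies as an anti-superderivation, since the definition is stated for homogeneous maps. I would handle this as the paper does, by splitting $T_x$ into its even and odd parts and verifying the identity for each part. This works because all brackets involving $T_x$ vanish identically. Alternatively, one can arrange each part to have a single parity as in Case 3 of Theorem~\ref{A}. The conclusion is that $\Delta\in\operatorname{GLAntiDer}_{\bar0}(L)\setminus\operatorname{AntiDer}(L)$.
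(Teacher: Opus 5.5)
Your proposal is correct and follows the paper's proof: the same map $\Delta$ ($0$ off $L^1$, $-2$ on $L^1$), the same failure of the identity on a bracket $[p,q]\neq 0$, and the same two locality cases. In Case 1 the paper uses the map $T_x$ that spreads $-2z$ over the non-$L^1$ support, and in Case 2 ($x\in L^1$) it uses $T_1$. Your explicit justification fills in what the paper only calls ``straightforward to check'': $L^1\subseteq Z(L)$, so any map sending the complement into $L^1$ and killing $L^1$ is an anti-superderivation componentwise.
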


\begin{proof}
Define a homogeneous linear map $\Delta: L\rightarrow L$ by
\[
\Delta(e_i) =
\begin{cases}
0, & e_i \in L \setminus L^1, \\
-2e_i, & e_i \in L^1.
\end{cases}
\]

First, we verify that $\Delta$ is not an even anti-superderivation. Take $p,q \in L \setminus L^1$ such that $[p,q] = e_r\in L^1$ with $e_r\neq 0$. Then
\[
\Delta([p, q])=-2e_r\neq0=-[\Delta(p), q]-(-1)^{|\Delta||p|}[p, \Delta(q)] .
\]

Next, we prove that $\Delta$ is an even graded local anti-superderivations . Let $x =\displaystyle\sum_{i=1}^{m+n} \lambda_i e_i$. Then
\[
\Delta(x) = -2\left(\sum_{i=l+1}^m \lambda_i e_i+\sum_{i=m+t+1}^{m+n} \lambda_i e_i\right).
\]

We consider the following cases:

\medskip
\noindent\textit{Case 1.}
Suppose there exist $k(0<k\leq l+t)$ basis elements $e_{i_1}, e_{i_2}, \cdots, e_{i_k}$such that their coefficients are non-zero.
 \[
 x =\displaystyle\sum_{s=1}^k \lambda_{i_s} e_{i_s} + \displaystyle\sum_{i=l+1}^m \lambda_i e_i + \displaystyle\sum_{i=m+t+1}^{m+n} \lambda_i e_i.\quad i_s \in\{1,\cdots,l\}\cup\{m+1,\cdots,m+t\}
 \]
 
Define a linear map $T_x: L\to L$ by
 \[
 T_x(e_j) =
 \begin{cases}
 \dfrac{-2}{k\lambda_j}\left(\displaystyle\sum_{i=l+1}^{m}\lambda_i e_i+\displaystyle\sum_{i=m+t+1}^{m+n}\lambda_i e_i\right),&  j\in{\{i_1,\cdots,i_k}\}, \\[8pt]
 0, &  j\notin{\{i_1,\cdots,i_k}\}.
 \end{cases}
 \]
It is straightforward to check that $T_x$ is an anti-superderivation, and
\[
T_x(x) = \sum_{s=1}^k \lambda_{i_s} T_x(e_{i_s}) 
= -2\left(\sum_{i=l+1}^m \lambda_i e_i+\sum_{i=m+t+1}^{m+n} \lambda_i e_i\right)=\Delta(x).
\]

\medskip
\noindent\textit{Case 2.}
Suppose $\lambda_j = 0$ for all $j\in \{1,\cdots,l\}\cup \{m+1,\cdots,m+t\}$. Then $x\in L^1$, and $\Delta(x) = -2x$.

Take the anti-superderivation $T_x=T_1$ of $L$, where
\begin{align*}
  T_x(e_i)=
  \begin{cases}
  e_i,\quad &e_i\in L\setminus L^1, \\
  -2e_i,\quad &e_i\in L^1.
  \end{cases}
\end{align*}
Then $T_x(x) = -2x = \Delta(x)$.

In conclusion, $\operatorname{AntiDer}_{\bar{0}}(L)\subsetneq \operatorname{GLAntiDer}_{\bar{0}}(L)$.

Furthermore, $\operatorname{AntiDer}(L)\subsetneq \operatorname{LAntiDer}(L)$, $L$ admits pure local anti-superderivations.
\end{proof}

\begin{example}
Let $L$ be a finite-dimensional  even-centered Heisenberg Lie superalgebra over a field $\mathbb{F}$ with $\operatorname{char}\mathbb{F}\neq 2$. Let
  \begin{align*}
    B=\{e_1,\cdots, e_m,e_{m+1},\cdots,e_{2m},e_0\mid e_{-1},\cdots,e_{-n}\}
  \end{align*}
be a basis of $L$, with non-zero brackets given by
 \begin{align*}
   [e_i,e_{m+i}]=e_0,\quad [e_{-t},e_{-t}]=e_0,\quad i\in \{1,\cdots,m\},\ t\in \{1,\cdots,n\}.
  \end{align*}
Define a linear map $\Delta: L\rightarrow L$ by
  \begin{align*}
  \Delta(e_k)=
  \begin{cases}
    -e_0, &\quad k=0,\\
    0, &\quad k\neq 0.
    \end{cases}
  \end{align*}
 $\Delta$ is not an even anti-superderivation, but $\Delta$ is an even graded local anti-superderivation. Furthermore, $L$ admits pure local anti-superderivations.

\end{example}

\begin{remark}
When $\operatorname{char}\mathbb{F}=2$, the definitions of anti-superderivation and superderivation coincide.
\end{remark}

\section{$n$-step Nilpotent Lie Superalgebras ($n\geq$ 3)}

Let $B'=\{e_{l+1},\dots,e_m\mid e_{m+t+1},\dots,e_{m+n}\}$ be a homogeneous basis of $L^1$. By the basis extension theorem, we obtain a homogeneous basis of $L$:
\[
B=\{e_1,\dots,e_m\mid e_{m+1},\dots,e_{m+n}\}.
\]

Based on the definition of $L$ above, we obtain the following results.

\subsection{Local Superderivations}

\begin{theorem}\label{C}
Let $L$ be a finite-dimensional $n$-step nilpotent Lie superalgebra over a field $\mathbb{F}$ with nilpotency index $n\geq 3$. If there exists a non-trivial superderivation $D\in \operatorname{Der}(L)$ such that $D(L^1)\subseteq Z(L)$, then $L$ admits pure local superderivations.
\end{theorem}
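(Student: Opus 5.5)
The plan is to mimic the construction of Theorem~\ref{A}, with the given superderivation $D$ replacing the scalar map $D_1$ of Lemma~\ref{B}. Write $V=\operatorname{span}\{e_1,\dots,e_l\mid e_{m+1},\dots,e_{m+t}\}$, so that $L=V\oplus L^1$ as superspaces, and let $\pi:L\to L^1$ be the (even) projection along $V$. Define
\[
\Delta=D\circ\pi ,
\]
so that $\Delta(V)=0$ and $\Delta|_{L^1}=D|_{L^1}$. If $D$ is homogeneous, $\Delta$ is homogeneous of the same parity. Otherwise we replace $D$ by one of its homogeneous components: each component is again a superderivation and again maps $L^1$ into $Z(L)$, since $L^1$ and $Z(L)$ are graded.

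\emph{Step 1: $\Delta$ is a local superderivation.} Let $x=v+x_1$ with $v\in V$ and $x_1=\pi(x)\in L^1$.

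If $v=0$, take $D_x=D$; then $D_x(x)=D(x_1)=\Delta(x)$.

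If $v\neq 0$, pick a basis element $e_j$ of $V$ whose coefficient $\lambda_j$ in $x$ is nonzero. Define $D_x$ by
\[
D_x(e_j)=\tfrac{1}{\lambda_j}D(x_1), \qquad D_x(e_i)=0 \ \text{ for } i\neq j .
\]
The key observation is that any linear map $E$ with $E(L)\subseteq Z(L)$ and $E(L^1)=0$ is a superderivation. Indeed, both sides of the superderivation identity vanish: the left side because $[a,b]\in L^1$, and the right side because the values of $E$ are central. Splitting $E$ into homogeneous components, each satisfies the same two conditions, so $E$ is a sum of homogeneous superderivations. Here $D_x$ satisfies both conditions because $D(x_1)\in D(L^1)\subseteq Z(L)$ and $e_j\notin L^1$. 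Hence $D_x\in\operatorname{Der}(L)$ and
\[
D_x(x)=\lambda_jD_x(e_j)=D(x_1)=\Delta(x).
\]
This handles all the cases of Theorem~\ref{A} uniformly, with no division by $2$, which is why no hypothesis on $\operatorname{char}\mathbb{F}$ is needed.

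\emph{Step 2: $\Delta$ is not a superderivation.} Suppose it were, and take $a,b\in L$. If $a\in V$ then $\Delta(a)=0$. If $a\in L^1$ then $\Delta(a)\in Z(L)$. In either case $[\Delta(a),b]=0$, and symmetrically $[a,\Delta(b)]=0$. The superderivation identity would then force $\Delta([a,b])=0$ for all $a,b$, that is, $D(L^1)=\Delta(L^1)=0$. So $\Delta$ is pure as soon as $D(L^1)\neq0$.

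\emph{Main obstacle: the case $D(L^1)=0$ with $D\neq0$.} This is the step I expect to be hardest, and I would first check whether ``non-trivial'' in the statement is meant as $D|_{L^1}\neq0$. If it is not, the fallback is to exploit $n\ge3$: use $[L^i,L^j]\subseteq L^{i+j+1}$ and $L^{n-1}\subseteq Z(L)$, with $L^{n-1}\neq0$ and $L^{n-1}\subseteq L^1$ because $n\geq3$, to build a different superderivation that is nonzero on $L^1$ with central image there. One candidate is an inner superderivation $\operatorname{ad}z$ with $z\in L^{n-3}$, which maps $L^1$ into $L^{n-1}\subseteq Z(L)$. Another is a map sending a suitable element of $V$ into $L^{n-1}$ and vanishing on $L^1$, corrected so that it becomes nonzero on $L^1$. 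Then we rerun Steps 1--2 with that superderivation in place of $D$.
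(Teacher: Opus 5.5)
Your Steps 1--2 are correct and are essentially the paper's proof: $\Delta$ equals $D$ on $L^1$ and $0$ on the complement, central-valued superderivations killing $L^1$ handle $x\notin L^1$ (the paper spreads the value over all $k$ nonzero coefficients, you use one), and $D$ itself handles $x\in L^1$; your Step 2 uses $\Delta(L)=D(L^1)\subseteq Z(L)$ to kill the right-hand side for all $a,b$, which is cleaner than the paper's choice of a special pair $p,q$. The paper does not overcome your ``main obstacle'' either, since its proof just asserts from the hypothesis some $e_r\in L^1$ with $D(e_r)\neq0$, and $D|_{L^1}\neq0$ is the only sensible reading: by your own key observation every nonzero nilpotent $L$ has a nonzero superderivation mapping $V$ into $Z(L)$ and vanishing on $L^1$, so the literal hypothesis would be vacuous. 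You can therefore drop the fallback, which cannot work in general anyway, because $\operatorname{ad}z$ with $z\in L^{n-3}$ vanishes on $L^1$ whenever $[L^{n-3},L^1]=0$, as in Example~\ref{Q}.
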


\begin{proof}
Take $p,q \in L \setminus L^1$ such that $[p,q] = e_r\in L^1\setminus L^2$ with $e_r\neq 0$. By the assumption, there exists some $e_r\in L^1\setminus L^2$ such that $D(e_r)\neq0$.

Define a linear map $\Delta: L\rightarrow L$ by
\begin{align*}
  \Delta(e_i)=D|_{L^1}(e_i)=
  \begin{cases}
    D(e_i), \quad &e_i\in L^1,\\
    0,\quad &e_i\in L\setminus L^1.
  \end{cases}
\end{align*}

First, note that
\begin{align*}
  \Delta([p,q])=D(e_r)\neq0=[\Delta(p),q]+(-1)^{|\Delta||p|}[p,\Delta(q)],
\end{align*}
which implies that $\Delta$ is not a superderivation of $L$.

Next, we show that $\Delta$ is a local superderivation of $L$. For any element $x=\sum\limits_{i=1}^{m+n}\lambda_i e_i\in L$, we have
\begin{align*}
\Delta(x)=\sum_{i=l+1}^{m}\lambda_i D(e_i)+\sum_{i=m+t+1}^{m+n}\lambda_i D(e_i)\in D(L^1)\subseteq Z(L).
\end{align*}

We consider the following cases:

\medskip
\noindent\textit{Case 1.}
Suppose there exist $k(0<k\leq l+t)$ basis elements $e_{i_1}, e_{i_2}, \cdots, e_{i_k}$such that their coefficients are non-zero.
 \[
 x =\displaystyle\sum_{s=1}^k \lambda_{i_s} e_{i_s} + \displaystyle\sum_{i=l+1}^m \lambda_i e_i + \displaystyle\sum_{i=m+t+1}^{m+n} \lambda_i e_i.\quad i_s \in\{1,\cdots,l\}\cup\{m+1,\cdots,m+t\}
 \]
Define a linear map $D_x:L\rightarrow L$ by
  \[
 D_x(e_j) =
 \begin{cases}
 \dfrac{1}{k\lambda_j}\left(\displaystyle\sum_{i=l+1}^{m}\lambda_i D(e_i)+\displaystyle\sum_{i=m+t+1}^{m+n}\lambda_i D(e_i)\right), & j\in{\{i_1,\cdots,i_k}\}, \\[8pt]
 0, &  j\notin{\{i_1,\cdots,i_k}\}.
 \end{cases}
 \]
It is clear that $D_x$ is a superderivation of $L$, and
\begin{align*}
D_x(x)&=\sum_{s=1}^{k}\lambda_{i_s} D(e_{i_s})=\sum_{i=l+1}^{m}\lambda_i D(e_i)+\sum_{i=m+t+1}^{m+n}\lambda_i D(e_i)=\Delta(x).
\end{align*}

\medskip
\noindent\textit{Case 2.}
Suppose $\lambda_i=0$ for all $i\in\{1,\cdots,l\}\cup \{m+1, \cdots, m+t\}$. Define the superderivation $D_x=D$. Then
\begin{align*}
  D_x(x)=D(x)=D\left(\sum_{i=l+1}^{m}\lambda_i e_i+\sum_{i=m+t+1}^{m+n}\lambda_i e_i\right)=\Delta(x).
\end{align*}

Therefore, $\Delta$ is a local superderivation of $L$, but not a superderivation of $L$. Hence,
$\operatorname{Der}(L)\subsetneq \operatorname{LDer}(L)$, $L$ admits pure local superderivations.
\end{proof}

\begin{proposition}\label{G1}
Let $L$ be a finite-dimensional nilpotent Lie superalgebra over a field $\mathbb{F}$ generated by two elements. Then $L$ admits local superderivations that are not superderivations.
\end{proposition}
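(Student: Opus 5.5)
The plan is to split according to the nilpotency index $n$ of $L$. If $n=2$ (and $\operatorname{char}\mathbb{F}\neq 2$), Theorem \ref{A} applies directly. So the real work is $n\geq 3$. There I will construct a superderivation $D$ with $D(L^1)\subseteq Z(L)$ and $D|_{L^1}\neq 0$, and feed it into Theorem \ref{C}.

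I insist on $D|_{L^1}\neq 0$ because the proof of Theorem \ref{C} needs some $e_r\in L^1$ with $D(e_r)\neq 0$. Otherwise the map $\Delta=D|_{L^1}$ built there is zero, and so is not a pure local superderivation. The natural candidates are inner superderivations $D=\operatorname{ad}w$ with $w$ homogeneous.

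Throughout I use the standard inclusion $[L^i,L^j]\subseteq L^{i+j+1}$ for the lower central series indexed as in the paper ($L^0=L$). It is proved by induction on $i$ via the super Jacobi identity. It gives $\operatorname{ad}w(L^1)\subseteq L^{n-1}\subseteq Z(L)$ for every $w\in L^{n-3}$, since $L^n=0$ forces $L^{n-1}\subseteq Z(L)$. It remains to find a homogeneous $w\in L^{n-3}$ with $[w,L^1]\neq 0$, i.e. to show $[L^{n-3},L^1]\neq 0$.

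This is where two-generation enters. Let $a,b$ be homogeneous generators, which I can assume after replacing generators by homogeneous components modulo $L^1$. Then $L^k$ is spanned modulo $L^{k+1}$ by right-normed brackets $[x_1,[x_2,\dots,[x_k,x_{k+1}]\dots]]$ with $x_i\in\{a,b\}$. In particular $L^{n-1}\neq 0$ is spanned by right-normed brackets of length $n$ in $a,b$.

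For $n=3$ such a bracket is $[x_1,[x_2,[x_3,x_4]]]$. Here $[x_3,x_4]\in L^1$ and $[x_2,[x_3,x_4]]\in L^2$, so I rewrite it via Jacobi as a combination of brackets $[u,c]$ with $u\in L^0$ and $c\in L^1$. This exhibits $[L^0,L^1]\supseteq L^2\neq 0$; in fact $L^2=[L,L^1]$ by definition, so $n=3$ is immediate with $w$ a generator.

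For general $n$ I would show $L^{n-1}\subseteq [L^{n-3},L^1]$ by induction. I move the innermost bracket $c=[x_{n},x_{n+1}]\in L^1$ outward. Repeated Jacobi expresses $[x_1,[x_2,\dots,[x_{n-1},c]]]$ as a combination of $\pm[v,c']$, where $v$ is a bracket of $n-2$ generators (so $v\in L^{n-3}$) and $c'\in L^1$. Any terms landing in higher filtration degrees vanish because $L^n=0$. The nonvanishing of $L^{n-1}$ then yields homogeneous $w\in L^{n-3}$ and $c\in L^1$ with $[w,c]\neq 0$. Then $D=\operatorname{ad}w$ is a homogeneous superderivation, nontrivial on $L^1$, with $D(L^1)\subseteq Z(L)$, and Theorem \ref{C} finishes.

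I expect the main obstacle to be this rearrangement step, i.e. proving $L^{n-1}=[L^{n-3},L^1]$ with correct super signs, and checking it does not secretly need more than two generators. A second, independent difficulty is the case $n=2$ in characteristic $2$, where Theorem \ref{A} is unavailable and the example $H_{0,1}$ shows things can fail. If the statement is meant over all fields, I would treat $n=2$, $\operatorname{char}\mathbb{F}=2$ separately. The first thing to try there is a central map $\Delta$ supported on $L^1$, verified by hand using the small size of $L/L^1$ (spanned by $a,b$). Failing that, I would record a restriction to $\operatorname{char}\mathbb{F}\neq 2$ for $n=2$.
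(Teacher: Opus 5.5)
Your cases $n=2$ (via Theorem~\ref{A}, for $\operatorname{char}\mathbb{F}\neq 2$) and $n=3$ (where $L^2=[L,L^1]\neq 0$ gives a homogeneous $w$ with $\operatorname{ad}w|_{L^1}\neq 0$) are fine. You are also right that Theorem~\ref{C} really needs $D|_{L^1}\neq 0$.

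The gap is the step you flagged for $n\geq 4$. The inclusion $L^{n-1}\subseteq [L^{n-3},L^1]$ is \emph{false} for two-generated algebras, so no sign bookkeeping can rescue it. Take the model filiform Lie algebra (odd part zero) with basis $e_1,\dots,e_{n+1}$ and nonzero brackets $[e_1,e_i]=e_{i+1}$ for $2\leq i\leq n$. It is generated by $e_1,e_2$, and $L^j=\operatorname{span}\{e_{j+2},\dots,e_{n+1}\}$. So it is $n$-step nilpotent with $L^{n-1}=\operatorname{span}\{e_{n+1}\}$. For $n\geq 4$ you have $L^{n-3}\subseteq L^1$ and $[L^1,L^1]=0$, hence $[L^{n-3},L^1]=0$. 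Thus every $\operatorname{ad}w$ with $w\in L^{n-3}$ vanishes on $L^1$, and the inner route produces nothing.

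Your rearrangement breaks because $[x_1,[x_2,z]]=[[x_1,x_2],z]+(-1)^{|x_1||x_2|}[x_2,[x_1,z]]$ returns a term of exactly the original shape. For $x_1=x_2=e_1$ the identity is a tautology, since $[e_1,e_1]=0$. So $e_{n+1}=(\operatorname{ad}e_1)^{n-1}(e_2)$ is never brought into the form $[v,c']$ with $v\in L^{n-3}$, $c'\in L^1$. The paper itself remarks, after the theorem assuming $[L^{n-3},L^1]\neq 0$, that this condition frequently fails.

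The missing idea, which is the paper's, is to use an \emph{outer} superderivation prescribed on the generators. Since $[L^1,L^{n-2}]\subseteq L^n=0$, the nonzero space $L^{n-1}=[L,L^{n-2}]$ equals $[a,L^{n-2}]+[b,L^{n-2}]$. So there is a homogeneous $x\in L^{n-2}$ and a generator, say $a$, with $[a,x]\neq 0$. Set $D(a)=0$, $D(b)=x$, and extend by the super Leibniz rule. The paper does this case by case in the parities of the generators, e.g.\ $D(e_{m+1})=x$, $D([e_1,e_{m+1}])=[e_1,x]$, $D([e_{m+1},e_{m+1}])=2[x,e_{m+1}]$.

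A bracket of $k\geq 2$ generators is sent into $L^{n+k-3}$, so $D(L^1)\subseteq L^{n-1}\subseteq Z(L)$. Meanwhile $D([a,b])=\pm[a,x]\neq 0$, so $D|_{L^1}\neq 0$ and Theorem~\ref{C} applies.

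What remains is to check that $D$ is well defined on $L$. View $L$ as a quotient of the free Lie superalgebra on $a,b$. Components of relations of degree $\geq 3$ are sent to degree $\geq n+1$, hence to $0$ in $L$. Degree-$2$ relations must be checked directly in each parity configuration. They can occur only if a generator is odd: for two even generators such a relation forces $L^1=L^2$, so $L$ would be abelian.
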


\begin{proof}
Let $B=\{e_1,e_2,\cdots,e_m\mid e_{m+1},\cdots,e_{m+n}\}$ be a basis of $L$.

\medskip
\noindent\textit{Case 1.} $L=\langle e_1,e_{m+1}\rangle$.

Suppose $[e_1,e_{m+1}]=e_{m+2}$ and $[e_{m+1},e_{m+1}]=e_2$. Since $L$ is a nilpotent Lie superalgebra, there exists a positive integer $p$ such that $L^{p-1}\neq 0$ and $L^p=0$. Then there exists $x\in L^{p-2}$ such that $[e_1,x]\neq0$ or $[e_{m+1},x]\neq0$.

If $[e_1,x]\neq0$, let $[e_1,x]=y\in L^{p-1}$. Define an even linear map $D: L\to L$ by
\begin{align*}
  D(e_{m+1})=x,\quad D(e_{m+2})=y,\quad D(e_2)=2[x,e_{m+1}],\quad D(e_i)=0,
\end{align*}
for all $i\neq2,m+1,m+2$. Then $D$ is an even superderivation satisfying $D(L^1)\subseteq Z(L)$.

If $[e_{m+1},x]\neq0$, let $[x,e_{m+1}]=y\in L^{p-1}$. Define an even linear map $D:L\to L$ by
\begin{align*}
  D(e_{m+1})=x,\quad D(e_2)=2y,\quad D(e_{m+2})=[e_1,x],\quad D(e_i)=0,
\end{align*}
for all $i\neq2,m+1,m+2$. It is clear that $D$ is an even superderivation satisfying $D(L^1)\subseteq Z(L)$.

By Theorem \ref{C}, $L$ admits local superderivations that are not superderivations.

\medskip
\noindent\textit{Case 2.} $L=\langle e_{m+1},e_{m+2}\rangle$.

Suppose $[e_{m+1},e_{m+2}]=e_1$, $[e_{m+1},e_{m+1}]=e_2$, $[e_{m+2},e_{m+2}]=e_3$. Since $L$ is a nilpotent Lie superalgebra, there exists a positive integer $p$ such that $L^{p-1}\neq 0$ and $L^p=0$. Then there exists $x\in L^{p-2}$ such that $[e_{m+1},x]\neq0$ or $[e_{m+2},x]\neq0$. Let $[e_{m+1},x]=y\in L^{p-1}$.

Define an even linear map $D: L\to L$ by
\begin{align*}
  D(e_{m+2})=x,\quad D(e_{1})=y,\quad D(e_3)=2[x,e_{m+2}],\quad D(e_i)=0,
\end{align*}
for all $i\neq1,3,m+2$. It is clear that $D$ is an even superderivation satisfying $D(L^1)\subseteq Z(L)$. By Theorem \ref{C}, $L$ admits local superderivations that are not superderivations. The case $[e_{m+2},x]\neq0$ follows similarly.

\medskip
\noindent\textit{Case 3.} $L=\langle e_{1},e_{2}\rangle$.

In this case, the Lie superalgebra reduces to a Lie algebra. The relevant proof has been given in Proposition 3.9 of \cite{38}.

In summary, $L$ admits pure local superderivations.
\end{proof}

\begin{theorem}
Let $L$ be a 3-step nilpotent Lie superalgebra over a field $\mathbb{F}$. Then there exists a non-trivial superderivation $D\in \operatorname{Der}(L)$ of $L$ such that $D(L^1)\subseteq Z(L)$. Furthermore, $L$ admits pure local superderivations.
\end{theorem}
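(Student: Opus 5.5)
Since $L$ is 3-step nilpotent, we have $L^3=0$ and $L^2\neq 0$. Because $L^2=[L,L^1]$ and $[L,L^2]=L^3=0$, the ideal $L^2$ is contained in $Z(L)$. The plan is to write down an explicit non-trivial superderivation whose image lies in $L^2$, so that $D(L)\subseteq L^2\subseteq Z(L)$ and in particular $D(L^1)\subseteq Z(L)$. Once such a $D$ exists with $D(L^1)\neq 0$ (which is what the proof of Theorem \ref{C} actually uses, since it needs some $e_r\in L^1$ with $D(e_r)\neq0$), Theorem \ref{C} with $n=3$ gives the second assertion immediately.

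For the construction, the natural candidate is an inner derivation $D=\operatorname{ad}z$ with $z\in L^1$ homogeneous. By the super Jacobi identity it is a superderivation of parity $|z|$. Its image lies in $[L^1,L]=L^2\subseteq Z(L)$, and its restriction to $L^1$ is $[z,L^1]\subseteq [L^1,L^1]\subseteq L^3=0$. So $D(L^1)=0\subseteq Z(L)$ holds automatically, and $D$ is non-trivial as soon as $z\notin Z(L)$. Such a $z$ exists because $L^2=[L,L^1]\neq 0$: some basis element $e_r\in L^1$ satisfies $[e_p,e_r]\neq 0$ for some $e_p$.

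This choice proves the literal first sentence, but it is useless for Theorem \ref{C}, since it kills $L^1$. So the key step is to find a superderivation that is nonzero on $L^1$ and still maps $L^1$ into the center. I will use a ``central'' derivation. Choose a complement $V$ of $L^1$ spanned by the basis vectors $e_1,\dots,e_l,e_{m+1},\dots,e_{m+t}$. Define $D(v)=[a,v]$ for $v\in V$, where $a\in L^1$ is homogeneous and not in $Z(L)$. Then extend $D$ to $L^1$ so that it satisfies the Leibniz rule on brackets. Since $[a,\cdot]$ already is a superderivation, its extension is just $\operatorname{ad}a$, which is zero on $L^1$ again.

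Hence the extension step needs to be modified. Take $\varphi:L\to Z(L)$ to be any homogeneous linear map vanishing on $L^1$ and on $[L,L]$-relations, i.e. the composition $L\to L/L^1\to Z(L)$. Any such $\varphi$ is automatically a superderivation, since $\varphi([x,y])=0$ and $[\varphi(x),y]=0$; this is exactly the mechanism used in the case analysis of Theorem \ref{C}. Then look for $D=\operatorname{ad}a+\psi$ with $\psi$ nonzero on $L^1\setminus L^2$. The cleanest route is to mimic Proposition \ref{G1}. Pick $x\in L^1$ and a generator $e_{p}$ with $[e_p,x]\neq 0$. Define $D$ on generators by $D(e_p)=x$ and zero on the other elements of $V$. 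Then extend by $D([e_i,e_j])=[D e_i,e_j]+(-1)^{|D||e_i|}[e_i,De_j]$, which lands in $[L^1,L]=L^2\subseteq Z(L)$. On $L^2$ set $D=0$.

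The main obstacle is well-definedness. Linear relations among the brackets $[e_i,e_j]$ spanning $L^1$ must be respected by the prescribed values. Checking the Leibniz rule on $[L^1,L]$ is automatic because both sides vanish there: $D(L^2)=0$, $D(L^1)\subseteq Z(L)$, and $[L^1,L^1]=0$. For the relations, I would handle them by choosing $D$ on $V$ to be the restriction of a well-defined bilinear-compatible map, namely by taking $D=\operatorname{ad}a$ composed with a projection. If that fails in general, the fallback is to argue via the $\mathbb{Z}_2$-graded analogue of the Lie-algebra result (\cite{38}, Proposition 3.9 style), as in Case 3 of Proposition \ref{G1}.
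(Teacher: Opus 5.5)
There is a genuine gap in the ``furthermore'' part. Your $\operatorname{ad}z$ argument with $z\in L^1\setminus Z(L)$ does prove the first sentence as literally stated. You are also right that the proof of Theorem \ref{C} really needs $0\neq D(L^1)\subseteq Z(L)$, not just $D\neq 0$. But the proposal never produces such a $D$. The generator construction ($D(e_p)=x\in L^1$, zero on the rest of $V$, extended by the Leibniz rule) is left with its well-definedness open. Even when it is well defined, your criterion $[e_p,x]\neq 0$ does not make it nonzero on $L^1$. In the 3-step case it vanishes on $L^2$, and its values on $[V,V]$ come from the brackets $[x,e_j]$ with the other generators. For even $e_p$, the bracket $[e_p,x]$ only enters through $D([e_p,e_p])=[x,e_p]+[e_p,x]=0$. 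Concretely, take the purely even algebra $[e_1,e_2]=e_3$, $[e_1,e_3]=e_4$. The choice $e_p=e_1$, $x=e_3$ gives the derivation $D(e_1)=e_3$, $D(e_i)=0$ for $i\neq 1$. It vanishes on $L^1=\operatorname{span}\{e_3,e_4\}$ even though $[e_1,e_3]\neq 0$. The $\operatorname{ad}a+\psi$ idea and the fallback to a graded version of the Lie-algebra result are not carried out, so they do not close the gap.

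The missing step is a one-liner, and you already had the ingredients. You found basis elements with $[e_p,e_r]\neq 0$ and $e_r\in L^1$, but then applied $\operatorname{ad}$ to $e_r$ instead of $e_p$. Take $D=\operatorname{ad}e_p$, a homogeneous inner superderivation; $e_p$ need not lie in $L^1$. Then $D(L^1)=[e_p,L^1]\subseteq [L,L^1]=L^2\subseteq Z(L)$ and $D(e_r)=[e_p,e_r]\neq 0$, so $D$ is nonzero on $L^1$. Now combine this with your own observation that any map $L\to L/L^1\to Z(L)$ is a superderivation; this handles the elements with a nonzero component outside $L^1$. The argument of Theorem \ref{C} with $n=3$ then shows that $\Delta$, equal to $D$ on $L^1$ and to $0$ on the complement, is a pure local superderivation. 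This is exactly the paper's proof.
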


\begin{proof}
Since $L$ is a 3-step nilpotent Lie superalgebra, we have $L^3=0$, $L^2\neq0$, and $L^2\subseteq Z(L)$. Then there exist $x\in L$ and $y\in L^1$ such that $0\neq [x,y]\in L^2$. Taking $D=\operatorname{ad}x$, we have
\[
D(y)=\operatorname{ad}x(y)=[x,y]\in L^2\subseteq Z(L).
\]
Hence, $L$ admits local superderivations that are not superderivations.
\end{proof}

\begin{theorem}
Let $L$ be a finite-dimensional $n$-step nilpotent Lie superalgebra over a field $\mathbb{F}$ with $n>3$. If $[L^{n-3},L^1]\neq 0$, then there exists a non-trivial superderivation $D\in \operatorname{Der}(L)$ of $L$ such that $D(L^1)\subseteq Z(L)$. Furthermore, $L$ admits pure local superderivations.
\end{theorem}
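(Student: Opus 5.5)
The plan is to produce the required superderivation as an inner superderivation $\operatorname{ad}x$ for a suitable homogeneous $x\in L^{n-3}$, and then invoke Theorem \ref{C}. This mirrors the 3-step case, where $n-3=0$ and $x\in L$ was arbitrary.

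\textbf{Step 1: a filtration inclusion.} I would first establish the standard inclusion
\[
[L^i,L^j]\subseteq L^{i+j+1}\qquad\text{for all } i,j\ge 0,
\]
with the indexing $L^0=L$, $L^k=[L,L^{k-1}]$ used in the paper. The proof is by induction on $i$, for all $j$ simultaneously.
\begin{itemize}
\item For $i=0$ the inclusion is the definition of $L^{j+1}$.
\item For $i\ge1$, a spanning element of $L^i$ is a bracket $[a,b]$ with $a\in L$ homogeneous and $b\in L^{i-1}$ homogeneous. Each $L^k$ is a graded subspace, since it is spanned by brackets of homogeneous elements.
\item For homogeneous $c\in L^j$, rearrange the super Jacobi identity as
\[
[[a,b],c]=[a,[b,c]]-(-1)^{|a||b|}[b,[a,c]].
\]
\item By induction, $[b,c]\in L^{i+j}$, so $[a,[b,c]]\in L^{i+j+1}$. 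Also $[a,c]\in L^{j+1}$, so induction gives $[b,[a,c]]\in L^{(i-1)+(j+1)+1}=L^{i+j+1}$.
\end{itemize}
I expect this to be the main technical point, because the sign bookkeeping in the super Jacobi identity must be handled carefully. It is nevertheless routine once homogeneous spanning sets are used.

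\textbf{Step 2: choice of $x$ and the centrality of $D(L^1)$.} Since $[L^{n-3},L^1]\neq0$, and both spaces are spanned by homogeneous elements, there exist homogeneous $x\in L^{n-3}$ and $y\in L^1$ with $[x,y]\neq0$. Set $D=\operatorname{ad}x$. This is a homogeneous superderivation of parity $|x|$. By Step 1,
\[
D(L^1)=[x,L^1]\subseteq[L^{n-3},L^1]\subseteq L^{n-1}.
\]
Moreover $L^{n-1}\subseteq Z(L)$, because $[L,L^{n-1}]=L^n=0$. Hence $D(L^1)\subseteq Z(L)$, and $D$ is non-trivial because $D(y)\neq0$.

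\textbf{Step 3: applying Theorem \ref{C}.} The proof of Theorem \ref{C} actually uses that $D$ does not vanish on $L^1$, via a basis element $e_r\in L^1$ with $D(e_r)\neq0$. That is exactly what $y$ provides: since $D|_{L^1}\neq0$, some element of the homogeneous basis $B'$ of $L^1$ is not killed by $D$. Theorem \ref{C} then yields a local superderivation $\Delta=D|_{L^1}$, extended by $0$ on the complement. This $\Delta$ is not a superderivation, so $L$ admits pure local superderivations.
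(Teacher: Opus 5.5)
Your proposal is correct and follows the paper's proof: take $D=\operatorname{ad}x$ with $x\in L^{n-3}$ and $y\in L^1$ such that $[x,y]\neq0$, show $D(L^1)\subseteq L^{n-1}\subseteq Z(L)$, and invoke Theorem~\ref{C}. You also make explicit two points the paper leaves implicit: the inclusion $[L^i,L^j]\subseteq L^{i+j+1}$ (the paper only checks $D(y)\in Z(L)$ for a single $y$), and the fact that Theorem~\ref{C} actually needs $D|_{L^1}\neq0$ rather than merely $D\neq0$, which your choice of $y$ guarantees.
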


\begin{proof}
When $n\geq 4$, since $[L^{n-3},L^1]\neq 0$, there exist $x\in L^{n-3}$ and $y\in L^1$ such that $0\neq[x,y]\in L^{n-1}\subseteq Z(L)$. Taking $D=\operatorname{ad}x$, we have
\[
D(y)=\operatorname{ad}x(y)=[x,y]\in Z(L).
\]
\end{proof}
For the algebras discussed above, the existence of an inner superderivation guarantees the existence of pure local superderivations. However, a large number of finite-dimensional nilpotent Lie superalgebras satisfy the condition $[L^{n-3},L^1]= 0$, which means we cannot construct local superderivations via inner superderivations. Nevertheless, such algebras still possess superderivations satisfying the conditions of Theorem 3.1.

We present a concrete example below.

\begin{example}\label{Q}
Let $L$ be a finite-dimensional nilpotent Lie superalgebra over the complex field. Let
\begin{align*}
B=\{e_1,e_2,e_3,e_4,e_5\mid e_6,e_7,e_8\}
\end{align*}
be a basis of $L$, with non-zero brackets given by
\begin{flalign*}
   [e_1, e_2]&=e_3, \quad [e_1, e_3]=2e_4, \quad [e_1, e_4]=3e_5,  \quad [e_2, e_3]=e_5, \\
   [e_1, e_6]&=e_7, \quad [e_1, e_7]=e_8,  \quad [e_2, e_6]=e_8. 
\end{flalign*}
By definition, $Z(L)=\operatorname{span}\{e_5\}$, $L^4=0$, and $[L^1,L^1]=0$.

Define a superderivation $D:L\rightarrow L$ by
\begin{align*}
  D(e_2)=e_4,\quad D(e_3)=3e_5,\quad D(e_i) = 0,\quad \mbox{for all}\quad i\notin\{2,3\}.
\end{align*}
Since $L^1 = \operatorname{span}\{e_3,e_4,e_5,e_7,e_8\}$, it is clear that $D(L^1)\subseteq Z(L)$, which satisfies the assumption of Theorem 3.1. Now define a linear map $\Delta:L\rightarrow L$ by
\begin{align*}
\Delta(e_3) = 3e_5,\quad \Delta(x) = 0,\quad \mbox{for}\quad  x\neq e_3.
\end{align*}
Then $\Delta$ is a local superderivation. Moreover, since
\begin{align*}
\Delta([e_1,e_2])=\Delta(e_3)=3e_5\neq 0=[\Delta(e_1),e_2]+(-1)^{|\Delta||e_1|}[e_1,\Delta(e_2)],
\end{align*}
$\Delta$ is not a superderivation.

In conclusion, $L$ admits pure local superderivations.
\end{example}

\subsection{Local Anti-superderivations}

\begin{theorem}
Let $L$ be a $n$-step nilpotent Lie superalgebra over a field $\mathbb{F}$ with $n\geq3$. If there exists a non-trivial anti-superderivation $T\in\operatorname{AntiDer}(L)$ such that $T(L^1)\subseteq Z(L)$, then $L$ admits pure local anti-superderivations.
\end{theorem}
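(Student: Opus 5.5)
The plan is to mirror the proof of Theorem \ref{C}, replacing superderivations by anti-superderivations throughout. The whole argument rests on one observation, which I would record first as an auxiliary claim.

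\emph{Claim.} Let $S:L\to L$ be any homogeneous linear map with $S(L^1)=0$ and $S(L)\subseteq Z(L)$. Then $S$ is simultaneously a superderivation and an anti-superderivation. Indeed, for all $x,y\in L$ we have $[x,y]\in L^1$, so $S([x,y])=0$. On the other side, $[S(x),y]=0=[x,S(y)]$ because $S(x),S(y)\in Z(L)$. Hence both sides of the $\delta$-superderivation identity vanish for every $\delta$, in particular for $\delta=-1$. This claim is what justifies the routine ``it is clear that $D_x$ is a superderivation'' steps of Theorem \ref{C}, and it works verbatim in the anti case.

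\emph{Construction of $\Delta$.} Write $T=T_{\bar 0}+T_{\bar 1}$. Each homogeneous component is again an anti-superderivation with $T_\alpha(L^1)\subseteq Z(L)$, since $L^1$ and $Z(L)$ are graded. We also need $T|_{L^1}\neq 0$. This is the intended meaning of ``non-trivial'' here, as in Theorem \ref{C}; without it the construction gives $\Delta=0$. So we may choose a parity $\alpha$ with $T_\alpha|_{L^1}\neq0$ and replace $T$ by $T_\alpha$.

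Using the homogeneous basis $B$ adapted to $L^1$, define $\Delta(e_i)=T(e_i)$ for $e_i\in L^1$ and $\Delta(e_i)=0$ otherwise. Then $\Delta$ is homogeneous, $\Delta(L)\subseteq Z(L)$, and $\Delta|_{L^1}=T|_{L^1}$.

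\emph{$\Delta$ is not an anti-superderivation.} Since $L^1$ is spanned by brackets $[p,q]$ of homogeneous elements and $T|_{L^1}\neq0$, some such bracket satisfies $\Delta([p,q])=T([p,q])\neq0$. On the other hand,
\[
-[\Delta(p),q]-(-1)^{|\Delta||p|}[p,\Delta(q)]=0,
\]
because $\Delta(p),\Delta(q)\in Z(L)$. So the anti-superderivation identity fails at $(p,q)$. This works even if $p,q\in L^1$, so we do not need $p,q\notin L^1$ as Theorem \ref{C} did.

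\emph{$\Delta$ is a local anti-superderivation.} Take $x=\sum\lambda_ie_i$. We have
\[
\Delta(x)=\sum_{e_i\in L^1}\lambda_iT(e_i)\in Z(L).
\]
\textit{Case 1.} Some coefficients $\lambda_{i_1},\dots,\lambda_{i_k}$ with $e_{i_s}\notin L^1$ are nonzero. Define
\[
T_x(e_{i_s})=\frac{1}{k\lambda_{i_s}}\,\Delta(x),\qquad T_x(e_j)=0 \text{ for all other } j.
\]
Then $T_x(L^1)=0$ and $T_x(L)\subseteq Z(L)$, and $T_x(x)=\Delta(x)$. If $T_x$ is not homogeneous, split it by the parities of source and target into its even and odd parts. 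Each part still kills $L^1$ and lands in the graded subspace $Z(L)$. By the Claim each part lies in $\operatorname{AntiDer}_{\bar0}(L)$ or $\operatorname{AntiDer}_{\bar1}(L)$, so $T_x\in\operatorname{AntiDer}(L)$.

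\textit{Case 2.} All such coefficients vanish, so $x\in L^1$. Take $T_x=T$; then $T_x(x)=T(x)=\Delta(x)$.

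Hence $\Delta\in\operatorname{GLAntiDer}(L)\setminus\operatorname{AntiDer}(L)$, and so $\operatorname{AntiDer}(L)\subsetneq\operatorname{LAntiDer}(L)$.

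The only delicate point is the non-triviality step: we must ensure $T|_{L^1}\neq0$, pass to a homogeneous component, and locate a bracket where $\Delta$ is nonzero. I expect this, rather than the case analysis, to be the main obstacle.
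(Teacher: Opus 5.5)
Your proposal is correct and is essentially the paper's proof. It uses the same $\Delta$ ($T|_{L^1}$ extended by zero) and the same two-case choice of $T_x$, but it is more careful in three places: your Claim supplies the paper's ``it is clear'', passing to a homogeneous component $T_\alpha$ makes $\Delta$ graded, and witnessing failure with any bracket having $T([p,q])\neq 0$ (the right-hand side vanishes because $\Delta(L)\subseteq Z(L)$) avoids the paper's unjustified ``without loss of generality $T(e_r)\neq 0$'' for $e_r=[p,q]$ with $p,q\notin L^1$, which can fail since $T$ may kill every bracket of basis vectors outside $L^1$ while $T|_{L^2}\neq 0$. One slip you share with the paper: $\frac{1}{k\lambda_{i_s}}$ is undefined when $\operatorname{char}\mathbb{F}$ divides $k$ (the field is arbitrary here), so instead set $T_x(e_{i_1})=\lambda_{i_1}^{-1}\Delta(x)$ and $T_x=0$ on all other basis vectors.
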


\begin{proof}
 Take $p,q \in L \setminus L^1$ such that $[p,q] = e_r\in L^1\setminus L^2$ with $e_r\neq 0$. By the assumption, there exists some $e_t \in L^1\setminus L^2$ such that $T(e_t)\neq0$. Without loss of generality, assume $T(e_r)\neq0$.

Define a linear map $\Delta:L\to L$ by
\begin{align*}
  \Delta(e_i)=
  \begin{cases}
    T(e_i), \quad &e_i\in L^1,\\
    0,\quad &e_i\in L\setminus L^1.
  \end{cases}
\end{align*}
Since
\[
-[\Delta(p),q] -(-1)^{|\Delta||p|} [p,\Delta(q)] = 0 + 0 = 0 \neq T(e_r) = \Delta([p,q]),
\]
$\Delta$ is not an anti-superderivation.

Next, we show that $\Delta$ is a local anti-superderivation. 

We consider the following cases:

\medskip
\noindent\textit{Case 1.}
Suppose there exist $k$ $(0<k\leq l+t)$ basis elements $e_{i_1}, e_{i_2}, \cdots, e_{i_k}$ $(i_s \in\{1,\cdots,l\}\cup\{m+1,\cdots,m+t\})$ such that their coefficients are non-zero.
 \[
 x =\displaystyle\sum_{s=1}^k \lambda_{i_s} e_{i_s} + \displaystyle\sum_{i=l+1}^m \lambda_i e_i + \displaystyle\sum_{i=m+t+1}^{m+n} \lambda_i e_i.
 \]
Define a linear map $T_x: L\to L$ by
 \[
 T_x(e_j) =
 \begin{cases}
\dfrac{1}{k\lambda_j}\left(\displaystyle\sum_{i=l+1}^m \lambda_i T(e_i) + \displaystyle\sum_{i=m+t+1}^{m+n} \lambda_i T(e_i)\right),&   j\in{\{i_1,\cdots,i_k}\}, \\[8pt]
 0, & j\notin{\{i_1,\cdots,i_k}\}.
 \end{cases}
 \]
It is clear that $T_x$ is an anti-superderivation. Moreover,
  \[
  T_x(x) = \sum_{s=1}^k \lambda_{i_s} T_x(e_{i_s}) = \Delta(x).
  \]

\medskip
\noindent\textit{Case 2.}
Suppose $\lambda_j=0$ for all $j\in  \{1,\cdots,l\}\cup \{m+1,\cdots,m+t\}$. Define the linear map $T_x = T$. Then
  \[
  T_x(x) = T(x) = T\left(\displaystyle\sum_{i=l+1}^m \lambda_i e_i + \displaystyle\sum_{i=m+t+1}^{m+n} \lambda_i e_i\right)  = \Delta(x).
  \]

Therefore, $L$ admits pure local anti-superderivations.
\end{proof}

\begin{example}
Let $L$ be a $n$-step nilpotent Lie superalgebra. Let $B = \{e_1,e_2,\dots,e_m \mid e_{m+1},\dots,e_{m+n}\}$ be a basis of $L$, where $e_1,e_{m+1}$ are generators of the multiplication of $L$. Without loss of generality, assume
\[
[e_1,e_{m+1}] = e_{m+2}, \quad [e_{m+1},e_{m+1}] = e_2.
\]
Since $ L^n=0$ and $L^{n-1}\neq 0$, there exists $x\in L^{n-2}$ such that $[e_1,x]\ne 0$ or $[e_{m+1},x]\ne 0$.

If $[e_1,x]\ne 0$, let $[e_1,x]=y\in L^{n-1}$. Define an even linear map $D$ by
\[
D(e_{m+1}) = x, \quad D(e_{m+2}) =-y, \quad D(e_2) = -2[x,e_{m+1}], \quad D(e_i)=0,
\]
for all $i\ne 2,m+1,m+2$. It is clear that $D(L^1)\subseteq Z(L)$ and $D$ is an even anti-superderivation.

If $[e_{m+1},x]\ne 0$, let $[x,e_{m+1}]=y\in L^{n-1}$. Define an even linear map $D$ by
\[
D(e_{m+1}) = x, \quad D(e_2) = -2y, \quad D(e_{m+2}) =-[e_1,x], \quad D(e_i)=0,
\]
for all $i\ne 2,m+1,m+2$. It is clear that $D(L^1)\subseteq Z(L)$ and $D$ is an even anti-superderivation. By Theorem 4.6, $L$ admits pure local anti-superderivations.
\end{example}

\subsection*{Acknowledgements}
The authors are supported by the  NSF of Hei Longjiang Province (No. LH2024A014).


\end{document}